\newcommand{\shrinkmargins}[1]{
  \addtolength{\textheight}{#1\topmargin}
  \addtolength{\textheight}{#1\topmargin}
  \addtolength{\textwidth}{#1\oddsidemargin}
  \addtolength{\textwidth}{#1\evensidemargin}
  \addtolength{\topmargin}{-#1\topmargin}
  \addtolength{\oddsidemargin}{-#1\oddsidemargin}
  \addtolength{\evensidemargin}{-#1\evensidemargin}
  }
\DeclareMathOperator{\Conf}{Conf}
\DeclareMathOperator{\Grav}{Grav}
\DeclareMathOperator{\T}{T}
\DeclareMathOperator{\TTH}{TH}
\DeclareMathOperator{\Aff}{Aff}
\newcommand{\field}[1]{\mathbb{#1}}
\newcommand{\Z}{\field{Z}}
\newcommand{\R}{\field{R}}
\newcommand{\C}{\field{C}}
\renewcommand{\P}{\field{P}}
\newcommand{\DD} {\mathcal{D}}
\newcommand{\sTH} {\mathcal{TH}}
\newcommand{\im}{{\rm im \,}}
\newcommand{\MM}{\mathcal{M}}
\newcommand{\beq}{\begin{displaymath}}
\newcommand{\eeq}{\end{displaymath}}
\newcommand{\beqn}{\begin{equation}}
\newcommand{\eeqn}{\end{equation}}
\newcommand{\ibar}{\overline{i}}
\newcommand{\pbar}{\overline{p}}
\newcommand{\qbar}{\overline{q}}
\theoremstyle{plain}
\newtheorem{thm}{Theorem}[section]
\newtheorem{prop}[thm]{Proposition}
\newtheorem{cor}[thm]{Corollary}
\theoremstyle{definition}
\newtheorem{defn}[thm]{Definition}
\theoremstyle{remark}
\newtheorem{rem}[thm]{Remark}
\title{Operads of moduli spaces of points in $\C^d$}
\author{Craig Westerland}
\begin{document}
\bibliographystyle{amsalpha}

\maketitle

\begin{abstract}

We compute the structure of the homology of an operad built from the spaces $\TTH_{d, n}$ of configurations of points in $\C^d$, modulo translation and homothety.  We find that it is a mild generalization of Getzler's gravity operad, which occurs in dimension $d=1$.

\end{abstract}


\section{Introduction}

In a wealth of papers, e.g., \cite{getzler_2d,getzler_0, ginzburg_kapranov, getzler_kapranov, voronov, kimura_stasheff_voronov}, a number of connections between moduli spaces of curves and operads have been firmly established.  In this note, we explore an operad built out of moduli spaces of points in higher-dimensional objects.

In \cite{chen_gibney_krashen}, Chen, Gibney, and Krashen study a variety $\TTH_{d, n}$ of configurations of $n$ points in affine $d$-space modulo the action of the affine group, and define a compactification $\T_{d, n}$ of this variety.  In dimension $d=1$, these varieties return the familiar moduli spaces of points in $\P^1$: $\TTH_{1, n} = \MM_{0, n+1}$ and $\T_{1, n}$ is the Deligne-Mumford compactification $\overline{\MM}_{0, n+1}$.

Just as in dimension 1, $\TTH_{d,n}$ and $\T_{d, n}$ (or, for our purposes, their complex points) give rise to operads.  In the case of $\T_{d, n}$, as for $\overline{\MM}_{0, n+1}$, this structure arises via grafting of trees of projective spaces (as in a free operad).  The operadic structure on $\TTH_{d, n}$ may be derived from this via a form of transfer, though this is not quite the approach we take here.  Write $H_*(\TTH_{d})$ for the operad whose $n^{\rm th}$ term is $\Sigma H_*(\TTH_{d, n})$ (here $\Sigma$ indicates a shift of degree by 1).

\begin{defn}

Let $\Grav_d$ be the operad of graded $\Z$-modules generated by $k$-ary operations $\{a_1, \dots, a_k\} \in \Grav_d(k)$ of dimension $2d-1$, and $c \in \Grav_d(1)$, of dimension $-2$, subject to the relations
$$\{\{a_1, \dots, a_k\}, b_1, \dots, b_l \} = \sum_{1 \leq i < j \leq k} (-1)^{\epsilon(i, j)} \{ \{a_i, a_j \}, a_1, \dots, \hat{a}_i, \dots, \hat{a}_j, \dots, a_k, b_1, \dots, b_l \},$$
$$\begin{array}{ccc} c^d = 0, & {\rm and} & c \cdot \{a_1, \dots, a_k\} = \{a_1, \dots, c \cdot  a_i, \dots, a_k \}, \forall i. \end{array}$$

\end{defn}

If $d=1$, this is precisely the gravity operad introduced by Getzler in \cite{getzler_2d, getzler_0}, where it was shown to be isomorphic to the operad $\Sigma H_*(\MM_{0, n+1})$.  It is the purpose of this note to extend this result to the higher-dimensional setting:

\begin{thm} \label{main_thm}

There is an isomorphism of operads $H_*(\TTH_{d}) \cong \Grav_d$ in ``arity" $n>1$.

\end{thm}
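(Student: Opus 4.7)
The strategy I would pursue is to generalize Getzler's proof of the $d=1$ case by exploiting the principal bundle
\[
\Aff(\C^d) \longrightarrow F(\C^d, n) \longrightarrow \TTH_{d, n}
\]
with structure group $\Aff(\C^d) = \C^d \rtimes \Gm$. Factoring out translations first gives a principal $\Gm$-bundle $p \colon F(\C^d, n)/\C^d \to \TTH_{d, n}$, whose total space is canonically identified with $F(\C^d \setminus \{0\}, n-1)$. The first Chern class of $p$ will represent the unary class $c$, and the Gysin sequence for $p$, combined with the classical Arnold--Cohen computation of $H_*(F(\C^d, \bullet))$, should yield $H_*(\TTH_{d, n})$ additively.

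For the operad structure on $\Sigma H_*(\TTH_d)$, I would follow Getzler: the little $2d$-disks insertion on $F(\C^d, \bullet)$ fails to descend to $\TTH_{d, \bullet}$ because the rescaling needed to fit one configuration inside a puncture of another is pinned down only up to $\Gm$. Integration of chains over the compact $S^1 \subset \Gm$ then produces composition operations of homological degree $+1$, which become degree-preserving after the $\Sigma$-shift. Generators are identified as follows: in arity $2$, $\TTH_{d, 2} \cong \P^{d-1}$, and the basis of $H_*(\P^{d-1})$ in degrees $0, 2, \ldots, 2d-2$ matches (under $\Sigma$) the tower $c^{d-1}\{a_1, a_2\}, \ldots, c\{a_1, a_2\}, \{a_1, a_2\}$. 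For $k \geq 3$, the class $\{a_1, \ldots, a_k\}$ arises as a top-degree class in $H_{2d-2}(\TTH_{d, k})$ obtained by Gysin transfer from the dual linking classes $\lambda_{ij} \in H^*(F(\C^d, k))$.

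With these identifications, the three defining relations of $\Grav_d$ can be checked: the gravity relation descends from the quadratic Arnold (Yang--Baxter) identity among the $\lambda_{ij}$; the compatibility $c \cdot \{a_1, \ldots, a_k\} = \{a_1, \ldots, c \cdot a_i, \ldots, a_k\}$ expresses that $c$ is a universal Chern class commuting with operadic insertion of the underlying $\Gm$-bundles; and $c^d = 0$ in arity $\geq 2$ holds for purely dimensional reasons, since $c^d\{a_1, \ldots, a_k\}$ would lie in negative homological degree. The main obstacle, as in Getzler's original proof, will be showing injectivity of $\Grav_d(n) \to \Sigma H_*(\TTH_{d, n})$, i.e., that no further relations are needed. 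I expect this to reduce to a Poincar\'e polynomial comparison: the combinatorial dimension count from the presentation of $\Grav_d(n)$ should agree with the topological count furnished by the Gysin sequence together with the Fadell--Neuwirth inductive formula for $H_*(F(\C^d \setminus \{0\}, n-1))$, yielding the claimed isomorphism in arity $n > 1$.
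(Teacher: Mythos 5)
Your outline follows the same broad contour as the paper: the operad structure on $\Sigma H_*(\TTH_{d,\bullet})$ comes from an $S^1$-transfer applied to the little $2d$-disks compositions, the unary class $c$ is the Chern class of the bundle $\Conf_n(\C^d)\to\TTH_{d,n}$, and generators and relations are identified by mimicking Getzler. Your additive computation via the Gysin sequence of $\Conf_{n-1}(\C^d\setminus\{0\})\to\TTH_{d,n}$ is a workable variant of the paper's route, which instead runs the Serre spectral sequence of $\Conf_n(\C^d)/S^1\to\Conf_2(\C^d)/S^1\simeq\C P^{d-1}$; the latter has the advantage of exhibiting $H^*(\TTH_{d,n})$ at once as a free $\Z[c]/(c^d)$-module with $c$ pulled back from the base. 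That pullback is also the honest reason that $c^d=0$: your ``purely dimensional reasons'' argument only disposes of $c^d\{a_1,\dots,a_k\}$ for the generators themselves. For a composite such as $\{\{a_1,a_2\},a_3\}$, which sits in shifted degree $4d-2$, the class $c^d\{\{a_1,a_2\},a_3\}$ would land in degree $2d-2\geq 0$, where the homology is generally nonzero, so dimension alone is silent; you would have to first establish the commutation relation and surjectivity and then bootstrap, whereas the nilpotence of $c$ is simply part of the ring computation (it is the restriction of $c$ from $\TTH_{d,2}\cong\C P^{d-1}$ along the equivariant forgetful map).

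The genuine gap is the step you defer: showing that $\Grav_d(n)\to\Sigma H_*(\TTH_{d,n})$ is injective, i.e., that the gravity relation and the $c$-relations are the only ones. The paper's mechanism for this --- entirely absent from your proposal --- is the $U(d)$-action on $\Conf_n(\C^d)$. The top generator $\Delta_d\in H_{2d-1}(U(d))$ is primitive, hence acts as a derivation both for the cup product and for the operad composition (Salvatore--Wahl); consequently $\ker\Delta_d$ is a suboperad of $H_*(\Conf_\bullet(\C^d))$, one computes $\ker\Delta_d\cong\Sigma^{2d-1}H_*(\Conf_{n-2}(\C^d\setminus\{a,b\}))$, and Getzler's proof of the presentation of the gravity operad (Theorem 4.5 of \cite{getzler_2d}) then transports verbatim to give the bracket presentation of $\ker\Delta_d$; the theorem follows because $H_*(\TTH_{d,n})$ is the free $\Z[c]/(c^d)$-module on this suboperad. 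Your proposed substitute --- a Poincar\'e polynomial comparison between the quadratic presentation of $\Grav_d(n)$ and the Fadell--Neuwirth count --- could in principle work, but extracting the dimension of $\Grav_d(n)$ from its presentation is itself a nontrivial (essentially Koszulness) statement, so as written the hardest part of the theorem remains unproved.
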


We expect this computation to be useful in determining the structure of the homology of the operad $\T_d$.  One concrete application of this result is as follows (derived from Theorem \ref{main_thm} and \cite{goperads}):

\begin{cor}

Let $X = \Omega^{2d} Y$ for an $S^1$-space $Y$ (more generally, let $X$ be an algebra over the $(2d)$-dimensional framed little disks operad).  Then the shifted equivariant homology $\Sigma H_*^{S^1}(X)$ is an algebra over the suboperad $(\Grav_d)_{>1}$ of arity $>1$.

\end{cor}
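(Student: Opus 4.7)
My plan is to deduce this as an essentially formal consequence of Theorem \ref{main_thm} together with the operadic transfer result of \cite{goperads}. The key input from that reference is that for any algebra $X$ over the framed little $2d$-disks operad, the shifted $S^1$-equivariant homology $\Sigma H_*^{S^1}(X)$---where $S^1$ acts via the framings---carries a natural action of the operad $H_*(\TTH_d)$ in arities $>1$. Geometrically, this transfer arises because $\TTH_{d,n}$ is the quotient of $\Conf_n(\C^d)$ by the affine group $\C^d \rtimes \C^*$, and the unit circle $S^1 \subset \C^*$ acts on the intermediate quotient by overall rotation; a slant-product with equivariant fundamental classes of $\TTH_{d,n}$ then converts homology classes on the moduli space into operations on $H_*^{S^1}(X)$, with the degree shift $\Sigma$ reflecting the complex dimension absorbed by the $\C^*$-quotient.

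The special case $X = \Omega^{2d} Y$ for an $S^1$-space $Y$ reduces to the general statement: combining the given $S^1$-action on $Y$ with the standard $SO(2d)$-action on the parameter disk equips $\Omega^{2d} Y$ with a framed little $2d$-disks algebra structure, since the $S^1 \subset SO(2d)$ rotation of the parameter disk can be absorbed by the $S^1$-action on the target $Y$ in the usual way. Hence it suffices to treat the more general framed little disks case.

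With the transfer from \cite{goperads} in hand, Theorem \ref{main_thm} identifies $H_*(\TTH_d)_{>1}$ with $(\Grav_d)_{>1}$, and so the shifted equivariant homology $\Sigma H_*^{S^1}(X)$ becomes a $(\Grav_d)_{>1}$-algebra. The only real obstacle in this plan is the verification of the transfer construction of \cite{goperads}---in particular, checking that it genuinely respects operadic composition---but taking that reference as a black box, the corollary follows at once.
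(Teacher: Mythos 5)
Your proposal matches the paper's (implicit) argument exactly: the corollary is stated there as ``derived from Theorem \ref{main_thm} and \cite{goperads},'' i.e.\ one takes as a black box the result of \cite{goperads} that $\Sigma H_*^{S^1}(X)$ is an algebra over $H_*(\TTH_d)$ in arity $>1$ for any framed little $2d$-disks algebra $X$ (with $\Omega^{2d}Y$ for an $S^1$-space $Y$ as the motivating example), and then applies the identification $H_*(\TTH_d)\cong \Grav_d$ in arity $>1$ --- which is precisely your reduction. The only slip is in your heuristic aside: the shift $\Sigma$ is by a single degree and arises from the $S^1$-transfer for the principal bundle $\Conf_n(\C^d)\to\TTH_{d,n}$ (sending a $q$-cycle in the base to the $(q+1)$-cycle lying over it), not from ``the complex dimension absorbed by the $\C^{\times}$-quotient,'' which would suggest a shift by two.
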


It is a pleasure to thank Michael Ching and Danny Krashen for helpful conversations about this material.  The author was partially supported by NSF grant DMS-0705428 and ARC grant DP1095831.


\section{The cohomology of $\TTH_{d, n}$}

Recall that the ordered configuration space of $n$ points in $\C^d$, $\Conf_n(\C^d)$, is the space
$$\Conf_n(\C^d) = \{ (x_1, \dots, x_n) \; | \; x_i \neq x_j \mbox{ if $i \neq j$} \} \subseteq (\C^d)^{\times n}.$$
This space is acted upon (component-wise) by the affine group $\Aff(\C^d) \cong \C^{\times} \rtimes \C^d$.  If $n>1$, the action is free.

\begin{defn}

For $n>1$, $\TTH_{d, n} := \Conf_n(\C^d) / \Aff(\C^d)$.

\end{defn}

The affine group is homotopy equivalent to its subgroup $S^1 = U(1)$, so there is a homotopy equivalence $\TTH_{d, n} \simeq \Conf_n(\C^d) / S^1$.

Define $p = p_{12}: \Conf_n(\C^d) \to \Conf_2(\C^d)$ by $p(x_1, \dots, x_n) = (x_1, x_2)$.  In general, write $p_{ij}(x_1, \dots, x_n) = (x_i, x_j)$.  This is a fibration, and is equivariant for the $S^1$-action.  Therefore, there is a commutative diagram of fibrations
$$\xymatrix{
\Conf_{n-2}(\C^d \setminus \{a, b\}) \ar[r]^-i \ar[d]_-= & \Conf_n(\C^d) \ar[r]^-p \ar[d]^-{q} & \Conf_2(\C^d) \ar[d]^-{\qbar} \\
\Conf_{n-2}(\C^d \setminus \{a, b\}) \ar[r]_-{\ibar} & \Conf_n(\C^d) / S^1 \ar[r]_-\pbar & \Conf_2(\C^d) / S^1
}$$
where $a, b$ are fixed, distinct points in $\C^d$.

Now, $\Conf_2(\C^d)$ is homotopy equivalent to $S^{2d-1}$, so for degree reasons the Serre spectral sequence for $p$ collapses at $E_2$.  This allowed \cite{clm} to prove that there is a ring isomorphism
$$H^*(\Conf_n(\C^d)) = \Lambda[x_{ij} \; | \; 1 \leq i \neq j \leq n] / (x_{ij} = x_{ji},\; x_{ij} x_{jk} + x_{jk} x_{ki} + x_{ki} x_{ij})$$
where $x_{ij}$ is the pullback under $p_{ij}^*$ of the generator of $H^{2d-1}(\Conf_2(\C^d)) = \Z$.  (See also \cite{getzler_jones}).

Consequently, one can identify $H^*(\Conf_{n-2}(\C^d \setminus \{a, b\}))$ as a quotient of $H^*(\Conf_n(\C^d))$:
\begin{eqnarray*}
H^*(\Conf_{n-2}(\C^d \setminus \{a, b\})) & \cong &  H^*(\Conf_n(\C^d))/(x_{12})\\
 & = & \Lambda[x_{ij} \; | \; 1 \leq i \neq j \leq n] / (x_{ij} = x_{ji},\; x_{ij} x_{jk} + x_{jk} x_{ki} + x_{ki} x_{ij}, \; x_{12})
\end{eqnarray*}

Now, since $\Conf_2(\C^d)$ is $S^1$-equivariantly homotopy equivalent to $S^{2d-1}$, $\Conf_2(\C^d) / S^1\simeq \C P^{d-1}$.  Therefore the Serre spectral sequence for $\pbar$ is of the form
$$E_2^{*, *} = H^*(\C P^{d-1}) \otimes H^*(\Conf_{n-2}(\C^d \setminus \{a, b\})) \implies H^*(\TTH_{d, n})$$
Again, the spectral sequence collapses because all differentials are determined on the fibre, and there are no possible targets for generators of the cohomology of the fibre for degree reasons.  We conclude:

\begin{prop} \label{comp_prop}

There is a ring isomorphism
\begin{eqnarray*} H^*(\TTH_{d, n}) & = &  H^*(\C P^{d-1}) \otimes H^*(\Conf_{n-2}(\C^d \setminus \{a, b\}))\\
 & = & \Z[c]/(c^d) \otimes  \Lambda[x_{ij} \; | \; 1 \leq i \neq j \leq n] / (x_{ij} = x_{ji},\; x_{ij} x_{jk} + x_{jk} x_{ki} + x_{ki} x_{ij}, \; x_{12})
\end{eqnarray*}

\end{prop}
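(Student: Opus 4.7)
The plan is to extract the result from the Serre spectral sequence of the fibration
$$\Conf_{n-2}(\C^d \setminus \{a, b\}) \longrightarrow \TTH_{d, n} \stackrel{\pbar}{\longrightarrow} \Conf_2(\C^d)/S^1 \simeq \C P^{d-1}$$
constructed in the preceding paragraphs. Since $\C P^{d-1}$ is simply connected, the local coefficient system is trivial and
$$E_2^{*,*} = H^*(\C P^{d-1}) \otimes H^*(\Conf_{n-2}(\C^d \setminus \{a,b\})).$$
Two points need checking: (i) the spectral sequence collapses at $E_2$, and (ii) the multiplicative extension problem is trivial, so that $H^*(\TTH_{d,n})$ is the full tensor product rather than just its associated graded.

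For (i), by multiplicativity it suffices to show that the generators of the fibre cohomology are permanent cycles (the class $c$ is automatically one, as differentials on base classes have negative vertical degree). These generators are the $x_{ij}$ of degree $2d-1$, and the fibre cohomology is concentrated in degrees $k(2d-1)$ for $k \geq 0$. A nontrivial differential $d_r(x_{ij})$ would live in $E_r^{r, 2d-r}$ with $r$ even and $2 \leq r \leq 2d-2$ (so that $H^r(\C P^{d-1}) \neq 0$) and with $H^{2d-r}(\text{fibre}) \neq 0$; but for $r$ in this range $2d-r$ lies strictly between $1$ and $2d-1$, hence cannot be a nonnegative multiple of $2d-1$, so no such target exists.

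For (ii), I would lift the generators and verify the relations on the nose. Let $c$ denote the pullback $\pbar^*$ of the generator of $H^2(\C P^{d-1})$, living in $H^2(\TTH_{d,n})$. Collapse implies that the fibre restriction $\ibar^*$ is surjective, so each $x_{ij}$ with $\{i,j\} \neq \{1,2\}$ admits a lift $\tilde x_{ij} \in H^{2d-1}(\TTH_{d,n})$. The relation $c^d = 0$ holds because this class sits in the filtration piece $F^{2d} H^{2d}(\TTH_{d,n}) = 0$ (there is no base cohomology above degree $2d-2$). The quadratic relations $\tilde x_{ij}^2$ and the Arnold relations among the $\tilde x_{ij}$ restrict to zero on the fibre, and hence live in $F^1 H^{4d-2}(\TTH_{d,n})$; but every $E_\infty^{p, 4d-2-p}$ with $p \geq 1$ vanishes, since requiring $p$ even with $0 \leq p \leq 2d-2$ and $4d-2-p$ a nonnegative multiple of $2d-1$ forces $p = 0$. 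Graded commutativity of $c$ with the $\tilde x_{ij}$ is automatic by the evenness of $|c|$. These checks produce a ring homomorphism from the target presentation to $H^*(\TTH_{d,n})$ inducing an isomorphism on associated gradeds, hence an isomorphism.

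The main obstacle is the extension problem: collapse itself is immediate from degree counting, but verifying that the relations lift without correction terms requires the combinatorial dichotomy between fibre degrees (multiples of the odd number $2d-1$) and base degrees (even and bounded above by $2d-2$). This dichotomy ensures that the filtration pieces where correction terms could live are all zero, forcing the lifts to satisfy the presentation on the nose.
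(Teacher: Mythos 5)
Your proof is correct and follows the same route as the paper: the Serre spectral sequence of $\pbar$ with $E_2 = H^*(\C P^{d-1}) \otimes H^*(\Conf_{n-2}(\C^d \setminus \{a,b\}))$, collapsing because the fibre cohomology is concentrated in degrees $k(2d-1)$ while the base cohomology is even and bounded by $2d-2$. The only difference is that you spell out the multiplicative extension argument (checking that the relations lift on the nose because the relevant filtration pieces vanish), which the paper leaves implicit; this is a welcome addition rather than a divergence.
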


It is worth remarking that in dimension $d=1$, this is a reflection of the wholly unsurprising fact that there is a homeomorphism $\MM_{0, n+1} \cong \Conf_{n-2}(\C \setminus \{ 0, 1 \})$.


\section{The action of $U(d)$}

Notice that there is an action of $U(d)$ on $\Conf_n(\C^d)$, of which the $S^1 = U(1)$-action is but a part.  The Pontrjagin ring of $U(d)$ is
$$H_*(U(d)) = \Lambda[\Delta_1, \dots, \Delta_d]$$
where $\Delta_k$ is a generator of dimension $2k-1$, obtained iteratively from fibrations over odd-dimensional spheres (see, e.g., \cite{mimura_toda, salvatore_wahl}).  These classes induce natural maps
$$\Delta_k :H_*(\Conf_n(\C^d)) \to H_{*+(2k-1)}(\Conf_n(\C^d))$$
via the group action.  

\begin{prop} \label{U_d_prop}

$H_*(\Conf_n(\C^d))$ is a free $\Lambda[\Delta_d]$-module over $H_*(\Conf_{n-2}(\C^d \setminus \{a, b\}))$.

\end{prop}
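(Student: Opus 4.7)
The plan is to use the $U(d)$-equivariance of the fibration $\Conf_{n-2}(\C^d \setminus \{a,b\}) \xrightarrow{i} \Conf_n(\C^d) \xrightarrow{p} \Conf_2(\C^d)$ and build the free module structure via the Pontrjagin action of $H_*(U(d))$. Concretely, I would define
\[
\Phi\colon \Lambda[\Delta_d] \otimes H_*\bigl(\Conf_{n-2}(\C^d \setminus \{a,b\})\bigr) \longrightarrow H_*(\Conf_n(\C^d)), \qquad \Delta_d^{\epsilon} \otimes \alpha \longmapsto \Delta_d^{\epsilon} \cdot i_*(\alpha),
\]
and establish that $\Phi$ is an isomorphism by comparing filtrations on source and target.

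The key geometric input is that $\Conf_2(\C^d)$ is $U(d)$-equivariantly homotopy equivalent to $S^{2d-1} = U(d)/U(d-1)$, via $(x_1, x_2) \mapsto (x_2 - x_1)/|x_2 - x_1|$, and that $\Delta_d \in H_{2d-1}(U(d))$ is characterized (up to sign) as the unique class mapping to the fundamental class of $S^{2d-1}$ under the quotient $U(d) \to U(d)/U(d-1)$. Dualizing the calculation in Proposition~\ref{comp_prop}, the homology Serre spectral sequence for $p$ collapses at $E^2$ for the same degree reasons, giving $E^\infty \cong H_*(S^{2d-1}) \otimes H_*(\Conf_{n-2}(\C^d \setminus \{a,b\}))$. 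I filter the source of $\Phi$ by powers of $\Delta_d$ and the target by the Serre filtration; the goal is to show that $\Phi$ preserves filtration and induces an isomorphism on associated gradeds, sending $\Delta_d^\epsilon \otimes \alpha$ to $s^\epsilon \otimes \alpha$, where $s$ is the top class of $H_*(S^{2d-1})$.

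The main obstacle is verifying this last claim about the associated graded: namely, that Pontrjagin multiplication by $\Delta_d$ raises Serre filtration by exactly $2d-1$ and acts as $s \otimes -$ on the $E^\infty$-page. I would deduce this from the commutative square of $U(d)$-actions
\[
\xymatrix{
U(d) \times \Conf_n(\C^d) \ar[r] \ar[d]_-{\text{id}\times p} & \Conf_n(\C^d) \ar[d]^-{p} \\
U(d) \times \Conf_2(\C^d) \ar[r] & \Conf_2(\C^d)
}
\]
which makes the Serre spectral sequence for $p$ into a module over the (trivial) spectral sequence for $U(d)$. On the base, the orbit map $U(d) \to \Conf_2(\C^d)$, $g \mapsto (g \cdot a, g \cdot b)$, sends $\Delta_d$ to $s$ by the defining property of $\Delta_d$ recalled above, and the spectral sequence module structure propagates this identification to all of $E^\infty$. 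A standard comparison of bounded filtered modules then yields that $\Phi$ itself is an isomorphism.
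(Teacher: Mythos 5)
Your proof is correct, but it takes a genuinely different route from the one in the paper. The paper argues dually, in cohomology: primitivity of $\Delta_d$ makes the dual action $\Delta_d^*$ a derivation on the Cohen--Lada--May presentation of $H^*(\Conf_n(\C^d))$; computing $\Delta_d^*(x_{ij})=1$, hence $\Delta_d^*(y_{ij})=0$ for $y_{ij}=x_{ij}-x_{12}$, identifies $\ker\Delta_d^*$ with the subalgebra $Y$ generated by the $y_{ij}$, and the decomposition $H^*(\Conf_n(\C^d)) = Y \oplus Y\cdot x_{12}$ exhibits the free module structure directly --- in effect $x_{12}$ does the filtration bookkeeping that your Serre filtration does. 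Your version replaces the ring presentation by the geometric statement that the orbit map $U(d)\to\Conf_2(\C^d)\simeq S^{2d-1}$ is (up to the equivariant equivalence) the quotient $U(d)\to U(d)/U(d-1)$, so that $\Delta_d\mapsto \pm s$, and then runs the standard comparison of filtered modules coming from a map of fibrations. The two arguments share the same geometric kernel --- the paper's unexplained claim $\Delta_d^*(x_{ij})=1$ is exactly your orbit-map computation transported along $p_{ij}$ --- but yours uses only the collapse of the spectral sequence for $p$ rather than the full presentation, so it is more portable, while the paper's is shorter. Two small repairs: $\Delta_d$ is the unique class hitting the fundamental class only modulo decomposables (for $d\geq 5$ there are decomposable classes in degree $2d-1$, e.g.\ $\Delta_1\Delta_2\Delta_3$ when $d=5$; these die in $H_*(S^{2d-1})$, and in any case you only use that $\Delta_d$ maps to $\pm s$); and for $d=1$ the collapse of the spectral sequence for $p$ is not forced by degree considerations alone, though the paper itself asserts and uses this collapse, so you are entitled to cite it.
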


\begin{proof}

We use the dual action, in cohomology.  That is, $H_*(U(d)$ acts on $H^*(\Conf_n(\C^d))$ via dual maps $\Delta_k^*$ which \emph{decrease} degree by $2k-1$.  Because each $\Delta_k$ is primitive, $\Delta_k^*$ is a derivation.  It is easy to see for degree reasons that the action of $\Delta_k^*$ on $H^*(\Conf_n(\C^d))$ is null except when $k=d$, and there,
$$\Delta_d^*(x_{ij}) = 1, \; \; \forall ij.$$

If we define $y_{ij} := x_{ij} - x_{12}$, then $H^*(\Conf_n(\C^d))$ is generated multiplicatively by $y_{ij}$, $ij \neq 12$ along with $x_{12}$.  Write $Y$ for the subalgebra generated by $\{ y_{ij} \; | \; ij \neq 12 \}$.  By the computations above, $i^*$ carries $Y$ isomorphically onto $H^*(\Conf_{n-2}(\C^d \setminus \{a, b\}))$.

Note that
$$H^*(\Conf_n(\C^d)) = Y \oplus Y\cdot x_{12}$$
That is, $H^*(\Conf_n(\C^d))$ is a free $\Lambda[x_{12}]$-module, generated by $Y$.  Clearly $\Delta_d^*(y_{ij}) = 0$, and since $\Delta_d^*$ is a derivation, this implies that $Y \subseteq \ker \Delta_d^*$.  For a general element $y + y' x_{12}$, we see that
$$\Delta_d^*(y + y' x_{12}) = 0 + 0 \cdot x_{12} + y' \Delta_d^*(x_{12}) = y'$$
so in fact, $Y = \ker \Delta_d^*$.  We conclude that as a $\Lambda[\Delta_d^*]$-module, $H^*(\Conf_n(\C^d))$ is a free over $Y\cdot x_{12}$.  Dually, $H_*(\Conf_n(\C^d))$ is therefore a free $\Lambda[\Delta_d]$-module over $Y^* = i_*(H_*(\Conf_{n-2}(\C^d \setminus \{a, b\})))$.

\end{proof}

\begin{rem}

This implies that the subspace $\ker\Delta_d = \im\Delta_d$ is isomorphic to the shifted copy 
$$\ker\Delta_d \cong \Sigma^{2d-1} H_*(\Conf_{n-2}(\C^d \setminus \{a, b\})).$$

\end{rem}


\section{$\TTH_{d}$ as an operad}

\begin{prop}

For each $d>0$, there is an operad $\sTH_d$ in the category of $S$-modules whose $n^{\rm th}$ term $\sTH_{d}(n)$ is weakly equivalent to the (shifted) suspension spectrum $\Sigma \Sigma^{\infty} (\TTH_{d, n})_+$ for $n>1$.

\end{prop}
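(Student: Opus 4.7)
The plan is to construct $\sTH_d$ from the little $2d$-disks operad $\DD = \DD_{2d}$ via a spectrum-level $S^1$-transfer. First I would equip $\DD$ with an $S^1$-action compatible with operadic composition: let $\lambda \in S^1 \subset \C^\times$ act on each $\DD(n)$ by scalar multiplication on $\C^d$, simultaneously rotating the centers of all little disks while preserving their radii. A direct calculation with the composition formula — if $\phi_i(x) = r_i x + c_i$ and $\psi_j^{(i)}(x) = r_j^{(i)} x + c_j^{(i)}$, then the composite center $r_i c_j^{(i)} + c_i$ scales by $\lambda$ under the diagonal action while the composite radius $r_i r_j^{(i)}$ is preserved — confirms that this action is operadic. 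Thus $\Sigma^\infty(\DD)_+$ becomes an operad in $S^1$-spectra with $\DD(n)_+ \simeq \Sigma^\infty \Conf_n(\C^d)_+$.

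For $n > 1$ the $S^1$-action on $\Conf_n(\C^d)$ is free with quotient $\TTH_{d,n}$, and the Becker--Gottlieb transfer of the principal bundle $S^1 \to \Conf_n(\C^d) \to \TTH_{d,n}$ supplies a natural stable map
\[
  \tau_n \colon \Sigma \Sigma^\infty(\TTH_{d,n})_+ \longrightarrow \Sigma^\infty \Conf_n(\C^d)_+.
\]
I would set $\sTH_d(n) := \Sigma\Sigma^\infty(\TTH_{d,n})_+$, with composition maps induced from those of $\DD$ using naturality of the transfer. The reason one cannot simply take the naive orbit-space operad $\DD/S^1$ is that operadic composition does not descend through the full product of quotients $\prod \DD(k_i)/S^1$: inspection shows it only descends through the subspace cut out by the diagonal condition $\lambda = \mu_1 = \dots = \mu_n$ on the $S^1$-factors. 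The transfer construction faithfully records this diagonal dependence at the level of spectra.

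The main technical obstacle I anticipate is verifying the operad axioms for $\sTH_d$, which reduces to establishing multiplicativity and coherence of the Becker--Gottlieb transfer with respect to $S^1$-equivariant composition maps. This requires working in a point-set model of $S^1$-equivariant $S$-modules sufficiently rigid for the transfer to be strictly natural — for instance, orthogonal $S^1$-spectra with the positive complete model structure. Associativity and unit axioms should then follow from the corresponding facts for $\DD$ together with naturality. As an alternative, one could define $\sTH_d$ via a Spanier--Whitehead-type cofiber construction applied to the tree-grafting operad structure on the compactification $\T_d$, using the open-closed decomposition $\T_{d,n} = \TTH_{d,n} \sqcup \partial \T_{d,n}$ — this is the ``form of transfer'' alluded to in the introduction, and either construction should yield an operad equivalent to the one described above.
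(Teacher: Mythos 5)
Your route is genuinely different from the paper's. The paper does not build the composition maps on $\Sigma \Sigma^{\infty}(\TTH_{d,n})_+$ by hand via transfers; instead it uses the fact (from \cite{salvatore_wahl}) that $\DD_{2d}$ is an $SO(2d)$-operad, restricts along the diagonal $S^1 \to SO(2d)$, and defines $\sTH_d$ to be the homotopy fixed point operad $\DD_{2d}^{hS^1}$ using the machinery of \cite{goperads}. Since homotopy fixed points form a lax symmetric monoidal functor, $\DD_{2d}^{hS^1}$ is an operad with no further work, and the content of the proposition reduces to identifying its terms: because the $S^1$-action on $\DD_{2d}(n) \simeq \Conf_n(\C^d)$ is free for $n>1$ with finitely dominated quotient, Theorem D of \cite{klein_dual} says the norm map $\Sigma \Sigma^{\infty}(\DD_{2d}(n)_{hS^1})_+ \to \DD_{2d}(n)^{hS^1}$ is an equivalence. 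Your approach is exactly the ``form of transfer'' that the introduction mentions and explicitly declines to take; a homological version of it appears in section 3.2 of \cite{goperads}.

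Two concrete problems with your version as written. First, the map you invoke is not the Becker--Gottlieb transfer: that transfer is degree-preserving, $\Sigma^{\infty} B_+ \to \Sigma^{\infty} E_+$, and for an $S^1$-bundle its composite with the projection induces multiplication by $\chi(S^1) = 0$, so it carries essentially no information here. The dimension-shifting map $\Sigma \Sigma^{\infty}(\TTH_{d,n})_+ \to \Sigma^{\infty}\Conf_n(\C^d)_+$ that you actually wrote down is the $S^1$-transfer (norm map) for the free action --- the very map the paper obtains from Klein's theorem --- and its multiplicativity properties are different from, and less classical than, those of the Becker--Gottlieb transfer. Second, and more seriously, you never actually define the composition $\sTH_d(k) \wedge \sTH_d(l) \to \sTH_d(k+l-1)$. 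A degree count shows the source is $\Sigma^2 \Sigma^{\infty}(\TTH_{d,k} \times \TTH_{d,l})_+$ while the target carries only one suspension, so you are forced to absorb a suspension via the transfer of the residual $S^1$-bundle $(\DD_{2d}(k) \times \DD_{2d}(l))/\Delta S^1 \to \TTH_{d,k} \times \TTH_{d,l}$ before mapping into $\TTH_{d,k+l-1}$; the associativity of the resulting maps is precisely the coherence-of-transfers problem that you defer to ``a sufficiently rigid point-set model.'' That deferred step is the entire content of the construction, and the paper's homotopy-fixed-point formulation exists precisely so that it never has to be proved directly.
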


The category of $S$-modules, introduced in \cite{ekmm} is a rigidification of the stable homotopy category of spectra to admit a symmetric monoidal smash product.  For those with little background or patience for the stable homotopy category, this proposition has the immediate (and down-to-earth) consequence:

\begin{cor} \label{h_*_cor}

The collection $H_*(\TTH_{d})(n) := \Sigma H_*(\TTH_{d, n})$, $n>1$, form a (non-unital) operad in the category of graded abelian groups.

\end{cor}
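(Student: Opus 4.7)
The plan is to derive the corollary by applying ordinary homology to the $S$-module-level operad $\sTH_d$ given by the preceding proposition, and tracking what the functor $H_*$ does to the structure maps.

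The key input is that singular homology $H_*(-;\Z)$ is a lax symmetric monoidal functor from the category of $S$-modules to graded abelian groups, via the external cross product / Künneth map
\[
\kappa : H_*(X) \tensor H_*(Y) \to H_*(X \wedge Y).
\]
Because $\Z$ is a PID and the homology groups of the spaces $\TTH_{d,n}$ are finitely generated, the Künneth map has no Tor obstruction to composing iterated smash products after tensoring, so the lax monoidal structure behaves well on iterated compositions. Consequently, given any (non-unital) operad $\EE$ in $S$-modules, the collection $\{H_*(\EE(n))\}$ inherits the structure of a (non-unital) operad in graded abelian groups: the operadic composition
\[
H_*(\EE(n)) \tensor H_*(\EE(k_1)) \tensor \cdots \tensor H_*(\EE(k_n)) \to H_*(\EE(k_1+\cdots+k_n))
\]
is obtained by applying $H_*$ to the structure map of $\EE$ and precomposing with iterated Künneth. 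The symmetric group actions are transferred similarly, and associativity and equivariance follow formally from the corresponding axioms in $\sTH_d$ together with the (symmetric) monoidality of $\kappa$.

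Applying this general recipe to $\EE = \sTH_d$, and using the weak equivalence $\sTH_d(n) \simeq \Sigma \Sigma^{\infty} (\TTH_{d,n})_+$ from the previous proposition, we identify
\[
H_*(\sTH_d(n)) \;\cong\; H_*\bigl(\Sigma \Sigma^{\infty}(\TTH_{d,n})_+\bigr) \;\cong\; \Sigma H_*(\TTH_{d,n})
\]
as graded abelian groups, which is exactly $H_*(\TTH_d)(n)$ as defined in the statement. Since the proposition only provides this identification in arities $n>1$, the operad we obtain is naturally non-unital and defined only in those arities, matching the statement of the corollary.

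I do not expect any serious obstacle here: everything is a formal consequence of the preceding proposition together with the lax monoidality of homology. The only mildly technical point to verify is that the Künneth map is compatible with the symmetric group actions and with the associativity constraints of the smash product on $S$-modules, so that the operad axioms really do transfer, but this is standard in the EKMM framework of \cite{ekmm}.
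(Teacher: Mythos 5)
Your proposal is correct and matches the paper's approach: the paper presents this corollary as an ``immediate and down-to-earth consequence'' of the preceding proposition, obtained exactly as you describe by applying the lax symmetric monoidal functor $H_*$ to the $S$-module operad $\sTH_d$ and identifying $H_*(\sTH_d(n)) \cong \Sigma H_*(\TTH_{d,n})$. (The remark about Tor terms is unnecessary --- lax monoidality only requires the cross product map $H_*(X)\otimes H_*(Y) \to H_*(X\wedge Y)$, not its invertibility --- but this does no harm, and the paper additionally points to a ``low-technology'' proof in \cite{goperads} that avoids $S$-modules entirely.)
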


We note that the shift by 1 is important; it accounts for a degree shifting $S^1$-transfer map inherent in this construction.  On $\sTH_d$, this transfer exists as an actual map between the spectra forming the operad.  For $H_*(\TTH_{d})$, it comes from a homological transfer map: for an $S^1$-bundle $E \to B$, the transfer sends an element of $H_q(B)$ to the $(q+1)$-dimensional cycle lying over it in $H_{q+1}(E)$.

\begin{proof}

Let $\DD_{2d}$ denote the operad of $2d$-dimensional little disks, after \cite{may}.  In \cite{salvatore_wahl}, this was shown to be an $SO(2d)$-operad.  Consider the group homomorphism $S^1 \to SO(2d)$ where $z \in S^1$ acts on $\C^d = \R^{2d}$ by $z \cdot (z_1, \dots, z_d) = (z \cdot z_1, \dots, z \cdot z_d)$.  By restriction, this makes $\DD_{2d}$ into an $S^1$-operad.  Using the machinery of \cite{goperads}, we define $\sTH_d$ as the \emph{homotopy fixed point operad} $\sTH_d := \DD_{2d}^{h S^1}$.

Now $\DD_{2d}(n)$ is $S^1$-equivariantly homotopy equivalent to $\Conf_n(\C^d)$.  Moreover, since the action of $S^1$ on the latter space is free ($n>1$), and its quotient $\TTH_{d, n}$ is equivalent to a finite CW complex, $\DD_{2d}(n)$ is $S^1$-equivariantly finitely dominated.   Thus by Theorem D of \cite{klein_dual}, the norm map gives a homotopy equivalence
$$\sTH_d(n) = \DD_{2d}(n)^{h S^1} \simeq \Sigma \Sigma^{\infty} (\DD_{2d}(n)_{h S^1})_+ \simeq \Sigma \Sigma^{\infty} (\TTH_{d, n})_+$$

\end{proof}

Although this result does not apply to the unary part of the homotopy fixed point operad (i.e., the Spanier-Whitehead dual $\DD_{2d}^{hS^1}(1) = F(BS^1_+, S^0)$), it will play a role in the section below in studying the interaction of the Chern class with the rest of the operad.

 A low-technology proof of Corollary \ref{h_*_cor} is given in section 3.2 of \cite{goperads}.


\section{The proof of Theorem \ref{main_thm}}

Since $\DD_{2d}$ is an $SO(2d)$-operad (and hence $U(d)$-operad), $H_*(\DD_{2d})$ is a $H_*(U(d))$-operad.  Moreover, the primitivity of $\Delta_d$ implies that it is a derivation for the \emph{operad composition} on $H_*(\DD_{2d}(n)) = H_*(\Conf_n(\C^d))$.  That is, the operad compositions $\circ_i$ satisfy
$$\Delta_k(a \circ_i b) = (\Delta_k a) \circ_i b + (-1)^{|a|} a \circ_i (\Delta_k b).$$
See \cite{salvatore_wahl}.  A consequence of this fact is therefore that $\ker \Delta_d \subseteq H_*(\DD_{2d}) = H_*(\Conf_*(\C^d))$ is a suboperad.  One can now copy the proof of Theorem 4.5 of \cite{getzler_2d} to get

\begin{thm} \label{ker_thm}

The operad $\ker \Delta_d$ is generated by operations $\{a_1, \dots, a_k\}$ of ``arity" $k$ (the $a_i$ are dummy variables) of dimension $2d-1$, subject to relations
$$\{\{a_1, \dots, a_k\}. b_1, \dots, b_l \} = \sum_{1 \leq i < j \leq k} (-1)^{\epsilon(i, j)} \{ \{a_i, a_j \}, a_1, \dots, \hat{a}_i, \dots, \hat{a}_j, \dots, a_k, b_1, \dots, b_l \}$$
where $\epsilon(i, j) = (|a_1| + \dots + |a_{i-1}|)|a_i| + (|a_1| + \dots + |a_{j-1}|)|a_j| + |a_i||a_j|$.

\end{thm}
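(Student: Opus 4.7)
The plan is to adapt Getzler's proof of Theorem~4.5 in \cite{getzler_2d} from the case $d=1$ to arbitrary $d$. The degree-$1$ generator of the classical gravity operad is replaced by a degree-$(2d-1)$ analogue; otherwise the argument runs essentially unchanged, because it rests only on the derivation property of $\Delta_d$ for operadic composition and on the fact that $|\Delta_d|$ is odd, both of which hold for every $d\ge 1$.

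First, I would identify the generators. Under the isomorphism
$$\ker \Delta_d \;\cong\; \Sigma^{2d-1}\, H_*(\Conf_{n-2}(\C^d \setminus \{a,b\}))$$
of the remark following Proposition~\ref{U_d_prop}, define $\{a_1, \dots, a_k\} \in H_{2d-1}(\Conf_k(\C^d))$ to be the class corresponding to the generator $1 \in H_0(\Conf_{k-2}(\C^d \setminus \{a,b\}))$. Equivalently, $\{a_1, \dots, a_k\} = \Delta_d(\mathbf{1}_k)$, where $\mathbf{1}_k \in H_0(\Conf_k(\C^d))$ is the class of a point configuration; this lies automatically in $\ker \Delta_d$ since $\Delta_d^2 = 0$ in the exterior Hopf algebra $H_*(U(d)) = \Lambda[\Delta_1, \dots, \Delta_d]$.

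Next, I would derive the generalized Jacobi identity. Applying $\Delta_d$ to the operadic identity $\mathbf{1}_l \circ_i \mathbf{1}_k = \mathbf{1}_{k+l-1}$ and using the derivation rule
$$\Delta_d(a \circ_i b) = (\Delta_d a) \circ_i b + (-1)^{|a|}\, a \circ_i (\Delta_d b)$$
produces the basic relation between $\{a_1, \dots, a_{k+l-1}\}$, $\{a_1, \dots, a_l\} \circ_i \mathbf{1}_k$, and $\mathbf{1}_l \circ_i \{a_1, \dots, a_k\}$. Iterating this derivation on a product of brackets $\{a_1, \dots, a_k\} \cdot b_1 \cdots b_l$ and carefully tracking the Koszul signs (which is where $\epsilon(i,j)$ comes from) produces the stated quadratic relation. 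This is Getzler's computation, copied essentially verbatim, since nothing in it depends on $d$ beyond the parity of $|\Delta_d|$.

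Finally, to verify completeness I would compare Poincar\'e series: the series of the operad presented by the given generators and single quadratic relation, computed via the Koszul duality machinery of \cite{ginzburg_kapranov}, must match the series of $\ker\Delta_d$ read off from Proposition~\ref{U_d_prop} together with the cohomology computation of Proposition~\ref{comp_prop}. The main obstacle is the sign bookkeeping in the derivation step together with confirming that no syzygies beyond the quadratic relation are required; as in Getzler's argument, this last point is a formal consequence of Koszul self-duality of the (appropriately shifted) gravity operad, a statement that is insensitive to $d$.
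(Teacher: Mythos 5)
Your proposal takes essentially the same route as the paper, whose entire argument here is to invoke the derivation property $\Delta_d(a \circ_i b) = (\Delta_d a)\circ_i b + (-1)^{|a|} a \circ_i (\Delta_d b)$ from \cite{salvatore_wahl} and then transcribe Getzler's proof of Theorem~4.5 of \cite{getzler_2d}, with the generator $\Delta_d(\mathbf{1}_k)$ now in degree $2d-1$ and the dimension count against Proposition~\ref{U_d_prop} unchanged since $|\Delta_d|$ is still odd. One small correction: the completeness step rests on the Koszul duality between gravity and hypercommutative (as in \cite{getzler_0, ginzburg_kapranov}), not on Koszul \emph{self}-duality of gravity, though this does not affect the argument.
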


This almost proves Theorem \ref{main_thm}; what remains is to identify 
$$H_*(\TTH_{d, n}) = \Z[c]/(c^d) \otimes \ker \Delta_d,$$
and to show that the operad structure behaves as indicated.

We note that there is a natural map
$$\phi: \TTH_{d, n} \to BS^1$$
that classifies the principal $S^1$-bundle $\Conf_n(\C^d) \to \TTH_{d, n}$.  This makes $H_*(\TTH_{d, n})$ into an $H^*(BS^1)$-module by
$$\alpha \cdot x := \phi^*(\alpha) \cap x$$
In this setting, the first Chern class $c\in H^2(BS^1)$ acts as if it were dimension $-2$.  By Proposition \ref{comp_prop}, $c^d$ acts as $0$, making $H_*(\TTH_{d})(n) = \Sigma H_*(\TTH_{d, n})$ a free $\Z[c]/c^d$-module over 
$$\Sigma^{2d-1} H_*(\Conf_{n-2}(\C^d \setminus \{ a, b \})) = \ker\Delta_d.$$  
Here we have shifted $H_*(\Conf_{n-2}(\C^d \setminus \{ a, b \}))$ up in dimension by $2(d-1)$ to make up for the fact that $\Z[c]/c^d$ acts by decreasing degree.  

By Proposition \ref{U_d_prop} and Theorem \ref{ker_thm}, then $H_*(\TTH_{d, n}) =\Grav_d(n)$; all relations are verified except
$$c \cdot \{a_1, \dots, a_k\} = \{a_1, \dots, c \cdot  a_i, \dots, a_k \}$$
This is, however, automatic in the continuous cohomology of the operad $\DD_{2d}^{hS^1}$, as seen in \cite{goperads}.  More concretely, one can proceed as follows.  The composition in $\DD_{2d}$ is that of an $S^1$-operad: each $\circ_i$ map
$$\xymatrix@1{
\DD_{2d}(k) \times \DD_{2d}(l) \ar[r]^-{\circ_i} & \DD_{2d}(k+l-1)
}$$
is equivariant (where $S^1$ acts diagonally on the left side).  Take $l=1$; then $\DD_{2d}(1)$ is contractible, so, oddly, this diagram commutes up to $S^1$-equivariant homotopy:
$$\xymatrix@1{
\DD_{2d}(k) \times \DD_{2d}(1) \ar[d]_-{T}^-{\simeq} \ar[r]^-{\circ_i}_-{\simeq} & \DD_{2d}(k) \\
\DD_{2d}(1) \times \DD_{2d}(k) \ar[ur]^-{\simeq}_-{\circ_1} & 
}$$
($T$ is the map that switches factors).  Quotienting by $S^1$, we have a homotopy commutative diagram
$$\xymatrix@1{
\TTH_{d, k+1} \times BS^1 \ar[d]_-{T}^-{\simeq} & \DD_{2d}(k) \times_{S^1} \DD_{2d}(1) \ar[l] \ar[d]_-{T}^-{\simeq} \ar[r]^-{\circ_i}_-{\simeq} & \DD_{2d}(k) / S^1 \\
BS^1 \times \TTH_{d, k+1} & \DD_{2d}(1) \times_{S^1} \DD_{2d}(k) \ar[l] \ar[ur]^-{\simeq}_-{\circ_1} & 
}$$
Apply (co)homology; the relations follow by comparing the passage along the top and bottom rows.

\bibliography{biblio}

\end{document}